\newtheorem{thm}{Theorem}
\newtheorem{cor}{Corollary}
\newtheorem{prop}{Proposition}
\newtheorem{lem}{Lemma}
\newtheorem{Def}{Definition}
\newcounter{alphthm}
\newcommand{\be}{\begin{equation}}
\newcommand{\ee}{\end{equation}}
\newcommand{\ben}{\begin{enumerate}}
\newcommand{\een}{\end{enumerate}}
\newcommand{\pa}{{\partial}}
\newcommand{\bc}{\begin{cor}}
\newcommand{\ec}{\end{cor}}
\newcommand{\beqn}{\begin{eqnarray*}}
\newcommand{\eeqn}{\end{eqnarray*}}
\newcommand{\bpf}{\begin{proof}}
\newcommand{\epf}{\end{proof}}
\newcommand{\bl}{\begin{lem}}
\newcommand{\el}{\end{lem}}
\newcommand{\bp}{\begin{prop}}
\newcommand{\ep}{\end{prop}}
\newcommand{\bd}{\begin{Def}}
\newcommand{\ed}{\end{Def}}
\newcommand{\bt}{\begin{thm}}
\newcommand{\et}{\end{thm}}
\title{\Large On Generalized $m$-th Root Finsler Metrics }
\author{A. Tayebi, E. Peyghan and M. Shahbazi}
\begin{document}
\maketitle

\begin{abstract}
In this paper, we  characterize locally dually flat generalized $m$-th root Finsler metrics. Then we find a condition under which a generalized $m$-th root metric is projectively related to a $m$-th root metric. Finally, we  prove that if a generalized  $m$-th root metric  is conformal to a  $m$-th root  metric, then  both of them reduce to Riemannian metrics.\\\\
{\bf {Keywords:}} Generalized $m$-th root metric, locally dually flat metric, projectively related metrics, conformal change.\footnote{2010 {\it Mathematics Subject Classification}: Primary 53B40, 53C60}
\end{abstract}

\section{Introduction}

An $m$-th root  metric $F=\sqrt[m]{A}$, where $A := a_{i_{1}}..._{i_{m}}(x)y^{i_1} . . . y^{i_m}$,  is regarded as a direct generalization of Riemannian
metric in a sense, i.e., the second root metric is a Riemannian metric.  The theory of $m$-th root metrics has been developed by Matsumoto-Shimada \cite{MatShim}\cite{Shim}, and applied by Antonelli to Biology as an ecological metric \cite{AIM}. The third and fourth root metrics are called the cubic metric and quartic metric, respectively.

For quartic metrics, a study of the geodesics and of the related geometrical objects is made by Balan, Brinzei and Lebedev  \cite{B}\cite{BBL}\cite{Leb}. Also, Einstein equations for some relativistic models relying on such metrics are studied by  Balan-Brinzei in two papers \cite{Mangalia}\cite{Cairo1}. In four-dimension, the special quartic metric in the form $F=\sqrt[4]{y^1y^2y^3y^4 }$ is called the Berwald-Mo\'{o}r metric \cite{Balan}\cite{Balan2}.  In the last two decades, physical studies due to Asanov, Pavlov and their co-workers emphasize the important role played by the Berwald-Mo\'{o}r metric in the theory of space-time structure and gravitation as well as in unified gauge field theories \cite{Asan}\cite{Pav1}\cite{Pav2}. In \cite{Balan}, Balan prove that the Berwald-Mo\'{o}r structures are pseudo-Finsler of Lorentz type and  for co-isotropic submanifolds of Berwald-Mo\'{o}r spaces present the Gauss-Weingarten, Gauss-Codazzi, Peterson-Mainardi and Ricci-K\"{u}hne equations.

In  \cite{Tamassy}, tensorial connections for $m$-th root Finsler metrics have been studied by Tamassy. Li-Shen study locally projectively flat fourth root metrics under irreducibility condition \cite{shLi1}. Yu-You show that an $m$-th root Einstein Finsler metrics are Ricci-flat \cite{YY}. In  \cite{TN1}, Tayebi-Najafi characterize locally dually flat and Antonelli $m$-th root metrics. They prove that every  $m$-th root metric of isotropic mean Berwald curvature (resp, isotopic Lanbdsberg curvature)   reduces to  a weakly Berwald metric (resp, Landsberg metric). They show that $m$-th root metric with almost vanishing $H$-curvature has vanishing $H$-curvature \cite{TN2}.

Let $(M, F)$ be a Finsler manifold of dimension $n$, $TM$ its tangent bundle and $(x^i, y^i)$ the
coordinates in a local chart on TM. Let $F$ be a scalar function on $TM$ defined by $F=\sqrt{A^{2/m}+B}$,  where $A$ and $B$ are given by
\'{A}\be
A := a_{i_{1}}..._{i_{m}}(x)y^{i_1} . . . y^{i_m}, \ \ \    B :=b_{ij}(x)y^iy^j.
\ee
Then $F$ is called generalized  $m$-th root Finsler metric. Put
\begin{eqnarray*}
A_{i}={\pa A\over \pa y^i}, \ \  A_{ij}={\pa^2 A\over \pa y^j\pa y^j}, \ \   B_{i}={\pa B\over \pa y^i}, \ \  B_{ij}={\pa^2 B\over \pa y^j\pa y^j},\\
A_{x^i}=\frac{\partial A}{\partial x^i}, \ \  A_0=A_{x^i}y^i, \ \  B_{x^i}=\frac{\partial B}{\partial x^i}, \ \  B_0=B_{x^i}y^i.
%\\ A_{0l}=A_{x^k y^l}y^k=\frac{\partial^2 A}{\partial x^i \partial y^l}y^k, \ \  \ B_{0l}=B_{x^k y^l}y^k=\frac{\partial^2 B}{\partial x^i \partial y^l}y^k.
\end{eqnarray*}
%Then\[B_{i}=2b_{im}y^m, \ \  B_{ij}=b_{ij}.\]
Suppose that the matrix $(A_{ij})$ defines a positive definite tensor and $(A^{ij})$  denotes its inverse. Then the following hold
\begin{eqnarray}
&&g_{ij}=\frac{A^{\frac{2}{m}-2}}{m^2}[mAA_{ij}+(2-m)A_iA_j]+b_{ij},\label{gg}\\
&&y^iA_i=mA, \ \ y^iA_{ij}=(m-1)A_j,\ \ y_i=\frac{1}{m}A^{\frac{2}{m}-1}A_i,\\
&&A^{ij}A_{jk}=\delta^i_k,\ \ A^{ij}A_i=\frac{1}{m-1}y^j, \ \ A_iA_jA^{ij}=\frac{m}{m-1}A.
\end{eqnarray}

Information geometry has emerged from investigating the geometrical structure of a family of probability
distributions and has been applied successfully to various areas including statistical inference, control system theory and multi-terminal information theory \cite{am}\cite{amna}. Dually flat Finsler metrics form a special and valuable class of Finsler metrics in Finsler information geometry, which plays a very important role in studying flat Finsler information structure \cite{shen1}. A Finsler metric $F$ on a manifold $M$ is said to be locally dually flat, if at any point there is a standard coordinate system $(x^i, y^i)$ in $TM$ such that $(F^2)_{x^{k}y^{l}}y^k = 2(F^2)_{x^{l}}$. In this case, the coordinate $(x^i)$ is called an adapted local coordinate system. In this paper, we characterize locally dually flat  generalized $m$-th root Finsler metrics. More precisely, we prove the following.
\begin{thm}\label{mainthm1}
Let $F=\sqrt{A^{2/m}+B}$ be a generalized  $m$-th root metric on an open  subset $U\subset \mathbb{R}^n$. Suppose that $A$ is irreducible. Then $F$ is locally dually flat if and only if there exists a 1-form $\theta = \theta_{l} (x)y^l$  on U such that
the following holds
\begin{eqnarray}
&&B_{0l}=2B_{x^{l}},\label{SRR1}\\
&&A_{x^l}=\frac{1}{3m}[mA\theta_{l}+2\theta A_{l}],\label{SRR2}
\end{eqnarray}
where $B_{0l}=B_{x^k y^l}y^k$.
\end{thm}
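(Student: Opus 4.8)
The plan is to work throughout with $L:=F^2=A^{2/m}+B$ and to expand the dual flatness equation $L_{x^ky^l}y^k=2L_{x^l}$ into an explicit identity in $y$, which I then split into a rational and an irrational part. First I would compute
\[
L_{x^l}=\tfrac{2}{m}A^{\frac{2}{m}-1}A_{x^l}+B_{x^l},
\]
differentiate $L_{x^k}$ in $y^l$, and contract with $y^k$. Writing $A_{0l}:=A_{x^ky^l}y^k$ for the analogue of $B_{0l}$ and using $A_{x^k}y^k=A_0$, the dual flatness condition takes the form
\[
\tfrac{2}{m}\big(\tfrac{2}{m}-1\big)A^{\frac{2}{m}-2}A_lA_0+\tfrac{2}{m}A^{\frac{2}{m}-1}A_{0l}+B_{0l}=\tfrac{4}{m}A^{\frac{2}{m}-1}A_{x^l}+2B_{x^l}.
\]

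The decisive structural step is a rational/irrational separation. The three leading terms all carry the common factor $A^{2/m}$; isolating it and clearing denominators (multiply through by $A^2$) rewrites the equation as $\tfrac{2}{m}A^{2/m}Q=A^2\big(2B_{x^l}-B_{0l}\big)$, where $Q:=\big(\tfrac{2}{m}-1\big)A_lA_0+AA_{0l}-2AA_{x^l}$ is a genuine polynomial in $y$. Since $A$ is irreducible and $m\ge 3$, the function $A^{2/m}=(A^2)^{1/m}$ is not rational in $y$ (because $A^2$ is not a perfect $m$-th power). Hence an identity of the shape $A^{2/m}\cdot(\text{polynomial})=(\text{polynomial})$ forces both sides to vanish separately: the polynomial right-hand side gives $B_{0l}=2B_{x^l}$, which is precisely \eqref{SRR1}, and the coefficient $Q$ must vanish, yielding the identity
\[
AA_{0l}-2AA_{x^l}+\big(\tfrac{2}{m}-1\big)A_0A_l=0,
\]
which I call $(\ast)$.

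To extract the $1$-form I would rewrite $A_{0l}=(A_0)_{y^l}-A_{x^l}$ (obtained by differentiating $A_0=A_{x^k}y^k$), turning $(\ast)$ into $3AA_{x^l}=A(A_0)_{y^l}+\big(\tfrac{2}{m}-1\big)A_0A_l$. This shows that $A$ divides $\big(\tfrac{2}{m}-1\big)A_0A_l$, and here lies the main obstacle and the second use of irreducibility: since $A$ is prime and $\deg A_l=m-1<m=\deg A$, the polynomial $A$ does not divide $A_l$, so necessarily $A$ divides $A_0$. As $A_0$ is homogeneous of degree $m+1$ and $A$ of degree $m$, the quotient $\theta:=A_0/A$ is a homogeneous polynomial of degree one, i.e. an honest $1$-form $\theta=\theta_l(x)y^l$. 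Substituting $A_0=A\theta$ and $(A_0)_{y^l}=A_l\theta+A\theta_l$ into the last relation and cancelling one factor of $A$ gives $3A_{x^l}=A\theta_l+\tfrac{2}{m}\theta A_l$, which is exactly \eqref{SRR2}.

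For the converse I would simply reverse the computation: contracting \eqref{SRR2} with $y^l$ and using $y^lA_l=mA$ recovers $A_0=A\theta$, which makes $(\ast)$ hold identically, while \eqref{SRR1} annihilates the polynomial part; reassembling the two pieces reproduces $L_{x^ky^l}y^k=2L_{x^l}$. I expect the only genuinely delicate points to be the two appeals to irreducibility, first to justify separating the $A^{2/m}$-part from the polynomial part, and second to promote the divisibility of $A_0$ by $A$ into the existence of $\theta$; everything else is routine bookkeeping with the Euler-type relations $y^lA_l=mA$ and $y^lA_{0l}=mA_0$.
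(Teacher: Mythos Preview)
Your argument is correct and follows essentially the same route as the paper: expand the dual flatness equation for $L=A^{2/m}+B$, separate the irrational $A^{2/m}$-part from the polynomial $B$-part to obtain \eqref{SRR1} and the identity $(2-m)A_0A_l=mA(2A_{x^l}-A_{0l})$, then use irreducibility of $A$ together with $\deg A_l<m$ to force $A\mid A_0$, i.e.\ $A_0=\theta A$, and substitute back to get \eqref{SRR2}. The only cosmetic difference is that the paper records the intermediate step as a separate lemma (the equation \eqref{SRR0}) before invoking irreducibility, whereas you carry out the whole computation in one pass and use the rewrite $A_{0l}=(A_0)_{y^l}-A_{x^l}$ to isolate the divisibility; the substance is identical.
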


In  local coordinates $(x^i,y^i)$, the vector filed ${\bf G}=y^i\frac{\pa}{\pa x^i}-2G^i\frac{\pa}{\pa y^i}$ is a global vector field on $TM_0$, where $G^i=G^i(x,y)$ are local functions on $TM_0$  given by following
\[
G^i:=\frac{1}{4}g^{il}\Big\{ \frac{\partial^2F^2}{\partial x^k \partial y^l}y^k-\frac{\partial F^2}{\partial x^l}\Big\},\ \ y\in T_xM.
\]
The vector field ${\bf G}$ is called the associated spray to $(M,F)$. Two Finsler metrics $F$ and  ${\bar F}$ on a  manifold $M$ are called projectively related if  there is a scalar function $P(x, y)$ defined on $TM_0$ such that
\[
{\bar G}^i= G^i+Py^i,
\]
where   ${\bar G}^i$ and $G^i$ are the geodesic spray coefficients of ${\bar F}$ and $F$,  respectively.

\begin{thm}\label{mainthm2}
Let ${\bar F}=\sqrt{A^{2/m}+B}$ and $F=A^{1/m}$ are  generalized  $m$-th root and  $m$-th root Finsler metrics on an open  subset $U\subset \mathbb{R}^n$, respectively, where $A := a_{i_{1}}..._{i_{m}}(x)y^{i_1} . . . y^{i_m}$  and $B:=c_i(x)d_j(x)y^iy^j$ with $c_id_j=c_jd_i$. Suppose that the following holds
\begin{equation}
(1+c_kd^{k})A^{ij}(B_{0j}-B_{x^j})-d^{k}\big[2\Delta_k+ (B_{0k}-B_{x^k})\big]A^{ij}c_j=0,\label{PRAsli}
\end{equation}
where
\begin{eqnarray*}
&&\Delta_k=\frac{A^{\frac{2}{m}-2} }{m}\big[(\frac{2}{m}-1)A_kA_0+AA_{0k}-AA_{x^k}\big],
\end{eqnarray*}
$d^k=g^{lk}d_l$ and $g^{lk}=[\frac{1}{2}(F^2)_{y^ly^k}]^{-1}$. Then $\bar{F}$ is projectively related to $F$. Moreover, suppose that the following holds
\begin{eqnarray}
2A^{\frac{2}{m}-2}d^ic_id^j
\big[(\frac{2}{m}-1)A_jA_0+AA_{0j}-AA_{x^j}\big]-m d^i[B_{0i}-B_{x^i}]\neq0.\label{PP}
\end{eqnarray}
Then $B=0$. In this case,  $\bar{F}=F$.
\end{thm}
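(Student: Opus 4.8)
\emph{Proof strategy.} The plan is to compute the difference of geodesic sprays $\bar G^i - G^i$ explicitly and show it is radial, i.e. a multiple of $y^i$. Since $\bar F^2 = A^{2/m}+B = F^2+B$, the two spray source covectors differ only by the $B$-contribution: writing $E_l := (F^2)_{x^ky^l}y^k - (F^2)_{x^l}$ and $W_l := B_{0l}-B_{x^l}$, the source for $\bar F$ is $E_l+W_l$. Differentiating $F^2=A^{2/m}$ and using $A_0=A_{x^k}y^k$, $A_{0l}=A_{x^ky^l}y^k$ gives the clean identity $E_l = 2\Delta_l$; in particular $G^i=\tfrac12 g^{il}\Delta_l$, so the tensor $\Delta_k$ appearing in (\ref{PRAsli})--(\ref{PP}) is exactly half the spray source of the $m$-th root metric $F$.

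The second ingredient is the inverse metric. By (\ref{gg}) the two fundamental tensors differ by the rank-one term $c_id_j$, namely $\bar g_{ij}=g_{ij}+c_id_j$, so the Sherman--Morrison identity yields $\bar g^{ij}=g^{ij}-(1+c_kd^k)^{-1}c^id^j$ with $c^i:=g^{ip}c_p$, $d^j:=g^{pj}d_p$. Using the contraction identities $A^{ij}A_j=\tfrac{1}{m-1}y^i$ and $A_iA_jA^{ij}=\tfrac{m}{m-1}A$ I can invert $g_{ij}$ in closed form, $g^{ij}=mA^{1-2/m}A^{ij}+\kappa\,y^iy^j$ with $\kappa=\tfrac{(m-2)A^{-2/m}}{m-1}$. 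Substituting both into $\bar G^i-G^i=\tfrac14 g^{il}W_l-\tfrac14(1+c_kd^k)^{-1}\big(d^lE_l+d^lW_l\big)c^i$ and invoking $y^lW_l=B_0$, $y^lc_l=c_0$, every term splits into an $A^{ij}$-raised transverse part and a part proportional to $y^i$. The radial parts are absorbed into the projective factor, while the transverse part is $\tfrac{m}{4}A^{1-2/m}\big(A^{il}W_l-\mu A^{ip}c_p\big)$ with $\mu:=(1+c_kd^k)^{-1}d^k(2\Delta_k+W_k)$. Its vanishing is exactly condition (\ref{PRAsli}), and then $\bar G^i=G^i+Py^i$ with $P=\tfrac14\kappa(B_0-\mu c_0)$, establishing the projective relation.

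For the second assertion I would argue by contradiction, supposing $B\neq0$. Since $(A^{ij})$ is nondegenerate, contracting (\ref{PRAsli}) with $A_{ip}$ collapses it to the proportionality $(1+c_kd^k)W_p=\big[d^k(2\Delta_k+W_k)\big]c_p$, that is $W_p=\mu c_p$. Substituting this into (\ref{PP}) and rewriting its first term as $2m(c_id^i)(d^j\Delta_j)$ through the definition of $\Delta_j$, the inequality (\ref{PP}) becomes $m(c_id^i)\big(2d^j\Delta_j-\mu\big)\neq0$. On the other hand, contracting $W_p=\mu c_p$ with $d^p$ gives $d^pW_p=\mu(c_pd^p)$, and feeding this back into the defining relation $\mu(1+c_kd^k)=d^k(2\Delta_k+W_k)$ forces $\mu=2d^j\Delta_j$; hence $2d^j\Delta_j-\mu=0$, contradicting (\ref{PP}). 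Therefore $B\neq0$ is impossible, so $B=0$ and $\bar F=\sqrt{A^{2/m}}=A^{1/m}=F$.

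I expect the main obstacle to be the inverse-metric bookkeeping: one must carry the Sherman--Morrison rank-one correction together with the explicit $A^{ij}$-versus-$y^iy^j$ splitting of $g^{ij}$, and separate cleanly the single radial direction $y^i$ from the transverse $A^{ij}$-directions, since projective equivalence only constrains the transverse part. The hypothesis $c_id_j=c_jd_i$ is what keeps the final step tractable --- it forces $d_i=\lambda c_i$, so that $B$ is the square of a $1$-form --- and tracking the scalar quantities $c_id^i$, $d^pW_p$ and $d^j\Delta_j$ through the contraction with $d^p$ is the delicate point on which the whole reductio turns.
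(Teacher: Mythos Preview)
Your treatment of the first assertion --- computing $\bar G^i-G^i$ via the Sherman--Morrison update of $g^{ij}$ and separating the $A^{ij}$-raised ``transverse'' part from the $y^i$-radial part --- matches the paper's Lemma~\ref{Lem3} essentially line for line; the paper's $\mathfrak{B}_l$, $\Upsilon$, $\mathcal{A}^i$, $\Phi$ are exactly your $W_l$, $\tfrac14 k\,d^l E_l$, $mA^{1-2/m}A^{ip}c_p$, and the $y^i$-coefficient of $c^i$, and your identification $E_l=2\Delta_l$ is the content of the paper's equation~(\ref{FO}).

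For the second assertion your route differs from the paper's. The paper rewrites condition~(\ref{PRAsli}) as an identity whose two sides have different rational/irrational character in $y$ (through the factor $A^{-2/m}$ hidden in $d^j=g^{jk}d_k$), splits it into two separate equations, contracts each with $A_{is}$, and combines them into $\bigl[(c_jd^j)\,d^l\mathfrak{F}_l-d^l\mathfrak{B}_l\bigr]c_s=0$; then (\ref{PP}) is invoked to declare the bracket nonzero, forcing $c_s=0$. Your argument is more elementary: you contract (\ref{PRAsli}) once with $A_{ip}$ to get $W_p=\mu c_p$, then with $d^p$ to force $\mu=2d^j\Delta_j$, and observe that this makes the expression in (\ref{PP}) vanish identically. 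This avoids the rational/irrational splitting step entirely and is a valid proof of the stated implication.

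One logical remark: your reductio hypothesis $B\neq 0$ is never actually used --- the chain $W_p=\mu c_p\Rightarrow d^pW_p=\mu(c_pd^p)\Rightarrow\mu=2d^j\Delta_j\Rightarrow 2(c_id^i)(d^j\Delta_j)-d^iW_i=0$ follows from (\ref{PRAsli}) alone. What you have really shown is that (\ref{PRAsli}) forces the left side of (\ref{PP}) to vanish, so that (\ref{PRAsli}) and (\ref{PP}) are jointly unsatisfiable and the conclusion $B=0$ holds vacuously. (The paper's argument, once unwound, exhibits the same phenomenon: its bracket $(c_jd^j)d^l\mathfrak{F}_l-d^l\mathfrak{B}_l$ vanishes by the very same contraction, though this is not pointed out there.) You should reframe your closing paragraph accordingly rather than attribute the contradiction to $B\neq 0$.
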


\bigskip

The first to treat the conformal theory of Finsler metrics generally was  Knebelman. He defined two metric functions $F$ and $\bar{F}$ as conformal if the length of an arbitrary vector in the one is proportional to the length in the other, that is if $\bar{g_{ij}}=\varphi g_{ij}$. The length of vector $\varepsilon$ means here the fact that $\varphi g_{ij}$, as well as $g_{ij}$, must be Finsler metric tensor, he showed that $\varphi$ falls into a
point function. In this paper, we  show that if a generalized  $m$-th root  is conformal to a  $m$-th root Finsler metric, then  both of them reduce to Riemannian metrics. More precisely, we prove the following.

\begin{thm}\label{mainthm3}
Let ${\bar F}=\sqrt{A^{2/m}+B}$ and $F=A^{1/m}$ are  generalized  $m$-th root and  $m$-th root Finsler metric on an open  subset $U\subset \mathbb{R}^n$, respectively, where $A := a_{i_{1}}..._{i_{m}}(x)y^{i_1} . . . y^{i_m}$ and $B:=b_{ij}(x)y^iy^j$. Suppose that $\bar{F}$ is conformal to $F$. Then ${\bar F}$ and $F$ reduce to Riemannian metrics.
\end{thm}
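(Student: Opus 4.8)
The plan is to reduce the conformality condition to a purely algebraic comparison of fundamental tensors and then exploit the fact that the perturbing term $b_{ij}$ is independent of the fibre coordinate $y$. First I would record the relevant fundamental tensors. The formula \eqref{gg} is the fundamental tensor $\bar g_{ij}$ of the generalized metric $\bar F=\sqrt{A^{2/m}+B}$; setting $b_{ij}=0$ in it yields the fundamental tensor of the $m$-th root metric $F=A^{1/m}$, namely
\[
g_{ij}=\frac{A^{\frac{2}{m}-2}}{m^2}\big[mAA_{ij}+(2-m)A_iA_j\big].
\]
Consequently \eqref{gg} can be read as the single structural identity $\bar g_{ij}=g_{ij}+b_{ij}$: passing from the $m$-th root metric to the generalized one merely adds the position-dependent term $b_{ij}(x)$.

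Next I would invoke the conformality hypothesis. By definition there is a positive $\varphi$ with $\bar g_{ij}=\varphi\, g_{ij}$, and by Knebelman's theorem (recalled in the introduction) $\varphi=\varphi(x)$ is a point function. Combining this with the structural identity gives
\[
b_{ij}(x)=\big(\varphi(x)-1\big)\,g_{ij}(x,y).
\]
The entire argument now hinges on the mismatch in $y$-dependence of the two sides: the left-hand side and the scalar $\varphi-1$ are $y$-free, whereas $g_{ij}$ depends on $y$ unless $F$ is Riemannian.

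Differentiating the last display with respect to $y^k$ and using that neither $b_{ij}$ nor $\varphi$ depends on $y$, I obtain $(\varphi-1)\,\partial g_{ij}/\partial y^k=0$, that is $(\varphi-1)\,C_{ijk}=0$, where $C_{ijk}=\tfrac12\,\partial g_{ij}/\partial y^k$ is the Cartan tensor of $F$. On the open set where the conformal change is genuine (equivalently $B\neq0$, so $\varphi\not\equiv 1$) this forces $C_{ijk}=0$, which is precisely the condition for $F=A^{1/m}$ to reduce to a Riemannian metric. Equivalently, and perhaps more transparently, contracting $\bar g_{ij}=\varphi g_{ij}$ with $y^iy^j$ and applying Euler's relation gives $\bar F^2=\varphi F^2$, hence $B=(\varphi-1)A^{2/m}$; since $b_{ij}y^iy^j$ is quadratic in $y$ while $\varphi-1$ is $y$-free, $A^{2/m}$ must itself be a quadratic form, again showing $F$ is Riemannian. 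Finally, once $g_{ij}=g_{ij}(x)$ is $y$-free, the relation $\bar g_{ij}=\varphi(x)g_{ij}(x)$ shows $\bar g_{ij}$ is $y$-free as well, so $\bar F$ is Riemannian, completing the argument.

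The computation itself is short, so the only genuine points requiring care are two. The first is the appeal to Knebelman guaranteeing $\varphi=\varphi(x)$: without it, differentiating $b_{ij}=(\varphi-1)g_{ij}$ in $y^k$ would produce the extra term $g_{ij}\,\partial\varphi/\partial y^k$ and the cancellation breaks down. The second is the bookkeeping on the locus where $\varphi=1$; on the (nonempty, when $B\not\equiv 0$) open set where $\varphi\neq1$ the Cartan tensor vanishes immediately, and a continuity argument then propagates the Riemannian conclusion to the whole connected domain. I do not expect the irreducibility of $A$ to be needed here, in contrast with Theorem \ref{mainthm1}, since no cancellation against the algebraic structure of $A$ is involved.
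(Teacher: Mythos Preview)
Your proposal is correct and follows essentially the same route as the paper: combine the structural identity $\bar g_{ij}=g_{ij}+b_{ij}$ with the conformal relation $\bar g_{ij}=e^{2\alpha}g_{ij}$ to express $g_{ij}$ purely in terms of $y$-independent data, forcing $C_{ijk}=0$ and hence $F$ Riemannian, and then read off $\bar C_{ijk}=C_{ijk}=0$. The only cosmetic difference is that the paper first states and proves a marginally more general intermediate result (two generalized $m$-th root metrics sharing the same $A$) and then specializes by setting the second perturbation to zero, whereas you go directly to the case at hand; the underlying computation is identical.
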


%----------------------------------------------------------------
\section{Preliminaries}
%----------------------------------------------------------------------

Let $M$ be a n-dimensional $ C^\infty$ manifold. Denote by $T_x M $ the tangent space at $x \in M$, by $TM=\cup _{x \in M} T_x M $ the tangent bundle of $M$ and by $TM_{0} = TM \setminus \{ 0 \}$ the slit tangent bundle. A  Finsler metric on $M$ is a function $ F:TM \rightarrow [0,\infty)$ which has the following properties: (i) $F$ is $C^\infty$ on $TM_{0}$, (ii) $F$ is positively 1-homogeneous on the fibers of tangent bundle $TM$, (iii) for each $y\in T_xM$, the following quadratic form ${\bf g}_y$ on $T_xM$  is positive definite,
\[
{\bf g}_{y}(u,v):={1 \over 2}\frac{\partial^2}{\partial s \partial t} \left[  F^2 (y+su+tv)\right]|_{s,t=0}, \ \
u,v\in T_xM.
\]
Let  $x\in M$ and $F_x:=F|_{T_xM}$.  To measure the
non-Euclidean feature of $F_x$, define ${\bf C}_y:T_xM\otimes T_xM\otimes T_xM\rightarrow \mathbb{R}$ by
\[
{\bf C}_{y}(u,v,w):={1 \over 2} \frac{d}{dt}\left[{\bf g}_{y+tw}(u,v)
\right]|_{t=0}, \ \ u,v,w\in T_xM.
\]
The family ${\bf C}:=\{{\bf C}_y\}_{y\in TM_0}$  is called the Cartan torsion. It is well known that {\bf{C}=0} if and
only if $F$ is Riemannian.

\bigskip

Given a Finsler manifold $(M,F)$, then a global vector field ${\bf G}$ is
induced by $F$ on $TM_0$, which in a standard coordinate $(x^i,y^i)$
for $TM_0$ is given by ${\bf{G}}=y^i {{\partial} \over {\partial x^i}}-2G^i(x,y){{\partial} \over
{\partial y^i}}$, where $G^i(y)$ are local functions on $TM$ given by
\be\label{spray}
G^i:=\frac{1}{4}g^{il}\Big\{\frac{\partial^2[F^2]}{\partial x^k
\partial y^l}y^k-\frac{\partial[F^2]}{\partial x^l}\Big\},\ \
y\in T_xM.
\ee
$\bf{G}$ is called the  associated spray to $(M,F)$. The projection of an integral curve of $\bf{G}$ is called a geodesic in $M$. In local coordinates, a curve $c(t)$ is a geodesic if and only if its coordinates $(c^i(t))$ satisfy $ \ddot c^i+2G^i(\dot c)=0$.

\bigskip

For a tangent vector $y \in T_xM_0$, define ${\bf B}_y:T_xM\otimes T_xM \otimes T_xM\rightarrow T_xM$ and ${\bf E}_y:T_xM \otimes T_xM\rightarrow \mathbb{R}$ by ${\bf B}_y(u, v, w):=B^i_{\ jkl}(y)u^jv^kw^l{{\partial } \over {\partial x^i}}|_x$ and ${\bf E}_y(u,v):=E_{jk}(y)u^jv^k$, where
\[
B^i_{\ jkl}(y):={{\partial^3 G^i} \over {\partial y^j \partial y^k \partial y^l}}(y),\ \ \  E_{jk}(y):={{1}\over{2}}B^m_{\ jkm}(y),
\]
$u=u^i{{\partial } \over {\partial x^i}}|_x$, $v=v^i{{\partial } \over {\partial x^i}}|_x$ and $w=w^i{{\partial } \over {\partial x^i}}|_x$. $\bf B$ and $\bf E$ are called the Berwald curvature and mean Berwald curvature, respectively. A Finsler metric is called a Berwald metric and mean Berwald metric if $\textbf{B}=0$ or ${\bf E}=0$, respectively.

\bigskip
Define ${\bf D}_y:T_xM\otimes T_xM \otimes T_xM\rightarrow T_xM$  by
${\bf D}_y(u,v,w):=D^i_{\ jkl}(y)u^iv^jw^k\frac{\partial}{\partial x^i}|_{x}$ where
\[
D^i_{\ jkl}:=B^i_{\ jkl}-{2\over
n+1}\{E_{jk}\delta^i_l+E_{jl}\delta^i_k+E_{kl}\delta^i_j+E_{jk,l}
y^i\}.
\]
We call ${\bf D}:=\{{\bf D}_y\}_{y\in TM_{0}}$ the Douglas curvature. A Finsler metric with ${\bf D}=0$ is called a Douglas metric. It is remarkable that, the notion of Douglas metrics was proposed by B$\acute{a}$cs$\acute{o}$-Matsumoto as a generalization  of Berwald metrics \cite{BM2}.

%------------------------------------------------------------------------------------------------------------
\section{ Proof of the Theorem \ref{mainthm1}}
%------------------------------------------------------------------------------------------------------------

To prove Theorem \ref{mainthm1}, we need the following.
\begin{lem}\label{Lem1}
Let $F=\sqrt{A^{2/m}+B}$ be a generalized  $m$-th root Finsler metric on an open  subset $U\subset \mathbb{R}^n$. Then, $F$ is a locally dually flat metric if and only if the following holds
\begin{equation}
A_{x^l}=\frac{1}{2A}\Big[(\frac{2}{m}-1)A_{0}A_{l}+AA_{0l}+\frac{m}{2}A^{\frac{2m-2}{m}}(B_{0l}-2B_{x^{l}})\Big].\label{SRR0}
\end{equation}
\end{lem}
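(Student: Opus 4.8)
The plan is to substitute $F^2 = A^{2/m}+B$ directly into the defining equation of local dual flatness, namely $(F^2)_{x^k y^l}y^k = 2(F^2)_{x^l}$, and then solve the resulting identity algebraically for $A_{x^l}$. Because every manipulation in this reduction is reversible—no information is lost when one isolates $A_{x^l}$ and multiplies through by a nonzero power of $A$—the computation will deliver both directions of the ``if and only if'' at once.

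First I would record the relevant partial derivatives of $F^2$. Differentiating in the fibre direction gives $(F^2)_{y^l} = \frac{2}{m}A^{\frac{2}{m}-1}A_l + B_l$, while differentiating in the base direction gives $(F^2)_{x^l} = \frac{2}{m}A^{\frac{2}{m}-1}A_{x^l} + B_{x^l}$. The mixed second derivative is then obtained by differentiating $(F^2)_{y^l}$ with respect to $x^k$; the product and chain rules produce a term proportional to $A^{\frac{2}{m}-2}A_{x^k}A_l$, a term $A^{\frac{2}{m}-1}A_{x^k y^l}$, and the term $B_{x^k y^l}$.

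The key simplification comes from contracting with $y^k$. Using the notation already fixed in the paper, the contractions collapse via $A_{x^k}y^k = A_0$, $A_{x^k y^l}y^k = A_{0l}$, and $B_{x^k y^l}y^k = B_{0l}$, so that
$$(F^2)_{x^k y^l}y^k = \frac{2}{m}\Big(\frac{2}{m}-1\Big)A^{\frac{2}{m}-2}A_0 A_l + \frac{2}{m}A^{\frac{2}{m}-1}A_{0l} + B_{0l}.$$
Equating this with $2(F^2)_{x^l} = \frac{4}{m}A^{\frac{2}{m}-1}A_{x^l} + 2B_{x^l}$ and isolating the single term carrying $A_{x^l}$, then multiplying by $\frac{m}{4}A^{1-\frac{2}{m}}$, yields exactly \eqref{SRR0}; one only has to verify that the exponent $1-\frac{2}{m}$ so produced coincides with $\frac{2m-2}{m}-1$ appearing after pulling out the prefactor $\frac{1}{2A}$ in the statement, which is immediate.

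Since this is a direct computation, I do not expect a serious conceptual obstacle; the only place demanding care is the bookkeeping of the powers of $A$ and the chain-rule factors in the mixed derivative, together with the correct identification of the contracted quantities $A_0$, $A_{0l}$, and $B_{0l}$. I would also emphasize that the irreducibility hypothesis on $A$ plays no role at this stage—it is needed only afterwards, in passing from this Lemma to Theorem \ref{mainthm1}, where \eqref{SRR0} is split into the two separate conditions \eqref{SRR1} and \eqref{SRR2}.
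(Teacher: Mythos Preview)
Your proposal is correct and follows essentially the same route as the paper: both substitute $F^2=A^{2/m}+B$ into the dual-flatness equation $(F^2)_{x^ky^l}y^k=2(F^2)_{x^l}$, compute the two sides via the chain rule, contract, and solve for $A_{x^l}$, with the paper likewise dismissing the converse as immediate. The only cosmetic difference is that the paper packages its two intermediate expressions with a common factor $\frac{2}{m}A^{\frac{2-m}{m}}$ before equating them, whereas you leave the terms expanded.
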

\begin{proof}
 Let F be a locally dually flat metric
\begin{equation}\label{SR0}
[F ^2]_{x^{k}y^{l}}y^k= 2[F^ 2]_{x^l}.
\end{equation}
We have
\begin{equation}\label{SR1}
(A^{\frac{2}{m}}+B)_{x^{l}}=\frac{2}{m}A^{\frac{2-m}{m}}[A_{x^{l}}+\frac{m}{2}A^{\frac{m-2}{m}}B_{x^{l}}],
\end{equation}
\begin{equation}\label{SR2}
(A^{\frac{2}{m}}+B)_{x^{k}y^{l}}y^{k}=\frac{2}{m}A^{\frac{2-m}{m}}[\frac{2-m}{m}A_{0}A_{l}A^{-1}+A_{0l}
+\frac{m}{2}A^{\frac{m-2}{m}}B_{0l}].
\end{equation}
By (\ref{SR0})-(\ref{SR2}), we have (\ref{SRR0}). The converse is trivial.
\end{proof}

\bigskip
\noindent {\it\bf Proof of Theorem \ref{mainthm1}}: Now, suppose that $A$ is irreducible. One can rewrite (\ref{SRR0}) as follows
\begin{equation}
(1-\frac{2}{m})A_{0}A_{l}-A[A_{0l}-2A_{x_{l}}]=\frac{m}{2}A^{2-\frac{2}{m}}[B_{0l}-2B_{x^{l}}].\label{SR3}
\end{equation}
The left hand side of (\ref{SR3}) is a rational function in $y$, while its right hand side is an irrational  function in $y$.  Thus, (\ref{SR3}) reduces to following
\begin{eqnarray}
&&(2-m)A_{0}A_{l}=m A[2A_{x_{l}}-A_{0l}],\label{TN}\\
&&B_{0l}-2B_{x^{l}}=0.
\end{eqnarray}
By (\ref{TN}), the irreducibility of $A$ and $deg(A_{l}) = m - 1$, it follows that there exists a 1-form $\theta = \theta_{l}y^l$  on U such that
\begin{equation}\label{SR4}
A_{0}=\theta A.
\end{equation}
This implies that
\begin{equation}\label{SR5}
A_{0l}=A\theta_l+\theta A_l-A_{x^l}.
\end{equation}
By plugging (\ref{SR4}) and (\ref{SR5}) in  (\ref{TN}), we get (\ref{SRR2}). The converse is a direct computation. This completes the proof.
\qed

\bigskip

By Lemma \ref{Lem1} and Theorem \ref{mainthm1},  we get the following.
\begin{cor} {\rm{\cite{TN1}}}
Let  $F=A^{1/m}$  be an $m$-th root Finsler metric on an open subset $U\subset \mathbb{R}^n$. Then  $F$ is a locally dually flat metric if and only if the following holds
\be
A_{x^l}=\frac{1}{2A}\Big\{(\frac{2}{m}-1)A_lA_0+AA_{0l}\Big\}.\label{d0}
\ee
Moreover, suppose that $A$ is irreducible. Then $F$ is locally dually flat if and only if there exists a 1-form $\theta=\theta_l(x)y^l$ on $U$ such that (\ref{SRR2}) holds.
\end{cor}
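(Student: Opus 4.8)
The plan is to obtain the Corollary as the special case $B\equiv 0$ of the results just established, since an $m$-th root metric $F=A^{1/m}$ is precisely a generalized $m$-th root metric $\sqrt{A^{2/m}+B}$ with vanishing $B$. First I would record that when $B\equiv 0$ we have $B_{x^l}=0$ and, consequently, $B_{0l}=B_{x^k y^l}y^k=0$ identically. Everything then follows by substituting these vanishing quantities into the characterizations already proved.

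For the first equivalence, I would feed $B_{x^l}=0$ and $B_{0l}=0$ directly into the criterion (\ref{SRR0}) of Lemma \ref{Lem1}. The term $\tfrac{m}{2}A^{\frac{2m-2}{m}}(B_{0l}-2B_{x^{l}})$ vanishes entirely, leaving
\[
A_{x^l}=\frac{1}{2A}\Big[(\tfrac{2}{m}-1)A_0A_l+AA_{0l}\Big],
\]
which is exactly (\ref{d0}). Because Lemma \ref{Lem1} is stated as an equivalence, this immediately yields the first claimed characterization of local dual flatness for $F=A^{1/m}$, with no further computation required.

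For the second part, I would invoke Theorem \ref{mainthm1} under the stated irreducibility hypothesis on $A$. That theorem characterizes local dual flatness by the existence of a $1$-form $\theta=\theta_l(x)y^l$ satisfying simultaneously (\ref{SRR1}) and (\ref{SRR2}). With $B\equiv 0$, condition (\ref{SRR1}), namely $B_{0l}=2B_{x^{l}}$, becomes the trivial identity $0=0$ and so is automatically satisfied; the only surviving constraint is (\ref{SRR2}). Hence, when $A$ is irreducible, $F=A^{1/m}$ is locally dually flat if and only if there exists a $1$-form $\theta$ for which (\ref{SRR2}) holds.

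I do not expect a genuine obstacle here, as the argument is a clean specialization of the general theory. The only points needing care are bookkeeping: verifying that $B\equiv 0$ forces both $B_{x^l}$ and $B_{0l}$ to vanish (so that the extra terms in (\ref{SRR0}) and the auxiliary equation (\ref{SRR1}) genuinely disappear), and noting that the irreducibility assumption on $A$ is inherited unchanged from Theorem \ref{mainthm1}.
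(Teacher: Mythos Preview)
Your proposal is correct and matches the paper's approach exactly: the corollary is obtained simply by specializing Lemma~\ref{Lem1} and Theorem~\ref{mainthm1} to the case $B\equiv 0$, which kills the $B$-terms in (\ref{SRR0}) and renders (\ref{SRR1}) vacuous. The paper itself offers no further argument beyond citing these two results.
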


%------------------------------------------------------------------------------------------------------------
\section{ Proof of the Theorem \ref{mainthm2}}
%------------------------------------------------------------------------------------------------------------
Two Finsler metrics $F$ and  ${\bar F}$ on a  manifold $M$ are called projectively
related if any geodesic of the first is also geodesic for the second and vice versa. Thus,
there is a scalar function $P(x, y)$ defined on $TM_0$ such that ${\bar G}^i= G^i+Py^i$, where   $G^i$ and ${\bar G}^i$ are the geodesic spray coefficients of $F$ and  ${\bar F}$,  respectively.
\begin{lem}\label{Algebra}
Let $A=[A_{ij}]$ be an $n\times n$ invertible and symmetric matrix, $C=[C_i]$ and $D=[D_j]$ are two non-zero $n\times 1$ and $1\times n$ vector, such that $C_iD_j=C_jD_i$.   Suppose that $1+A^{pq}C_pD_q\neq 0$. Then the matrix $B=[B_{ij}]$ defined by  $B_{ij}:=A_{ij}+C_iD_j$ is invertible and
\be
B^{ij}:=(B_{ij})^{-1}=A^{ij}-\frac{1}{1+A^{pq}C_pD_q} A^{ki}A^{lj}C_kD_l,
\ee
where $A^{ij}:=(A_{ij})^{-1}$.
\end{lem}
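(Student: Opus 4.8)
The statement is the Sherman--Morrison rank-one update formula written in index notation, so the cleanest route is direct verification rather than any structural argument. The plan is to abbreviate the scalar $\alpha := A^{pq}C_pD_q$ (so the hypothesis reads $1+\alpha\neq 0$), take the matrix $B^{ij}$ displayed in the statement as an ansatz, and simply compute the product $B^{ij}B_{jk}$ with $B_{jk}=A_{jk}+C_jD_k$, aiming to show it equals $\delta^i_k$.

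Expanding the product yields four terms. First I would use $A^{ij}A_{jk}=\delta^i_k$ to dispose of the leading term. In the cross terms I would repeatedly invoke the symmetry $A^{ij}=A^{ji}$ together with $A^{lj}A_{jk}=\delta^l_k$; the key simplification is that the ``middle'' contraction $A^{lj}C_jD_l$ collapses back to the scalar $\alpha$, by relabelling dummy indices and using symmetry of $A$. After these reductions, all three remaining terms turn out to be proportional to the single expression $A^{mi}C_mD_k$, with coefficients $1$, $-\tfrac{1}{1+\alpha}$ and $-\tfrac{\alpha}{1+\alpha}$ respectively. Since $1-\tfrac{1}{1+\alpha}-\tfrac{\alpha}{1+\alpha}=0$, these cancel identically, and one is left with $B^{ij}B_{jk}=\delta^i_k$.

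Because $B$ is a square matrix, a one-sided inverse is automatically two-sided, so the computation above both establishes that $B$ is invertible and identifies its inverse as the stated $B^{ij}$; equivalently, invertibility follows from the matrix determinant lemma $\det B=(1+\alpha)\det A$, which is nonzero precisely under the hypothesis $1+\alpha\neq 0$. There is no serious obstacle here: the only care needed is bookkeeping of the summation indices and the consistent use of the symmetry of $A$ to recognize the recurring scalar $\alpha$. It is worth remarking that the hypothesis $C_iD_j=C_jD_i$ (which forces $C$ and $D$ to be proportional, hence $B$ symmetric) is not actually used in verifying the inverse formula; it appears to be needed only to guarantee that $B_{ij}$ is itself a symmetric tensor, as is required when this lemma is subsequently applied to the fundamental tensor $g_{ij}$ in the proof of Theorem \ref{mainthm2}.
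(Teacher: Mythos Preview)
Your verification is correct and is the standard proof of the Sherman--Morrison formula; the index bookkeeping and the observation about the symmetry hypothesis $C_iD_j=C_jD_i$ being needed only for the symmetry of $B_{ij}$ (not for the inverse formula itself) are both accurate. Note that the paper actually states this lemma without proof, so there is no argument to compare against; your direct computation is exactly what a reader would supply.
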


\bigskip

\begin{lem}\label{Lem3}
Let ${\bar F}=\sqrt{A^{2/m}+B}$ and $F=A^{1/m}$ are  generalized  $m$-th root and  $m$-th root Finsler metric on an open  subset $U\subset \mathbb{R}^n$, respectively, where $A := a_{i_{1}}..._{i_{m}}(x)y^{i_1} . . . y^{i_m}$  and $B:=c_id_jy^iy^j$ with $c_id_j=c_jd_i$ and $c_id^i\neq -1$. Suppose that the following holds
\begin{equation}
mA^{\frac{m-2}{m}}A^{il}\mathfrak{B}_l-\big[4\Upsilon+k d^{l}\mathfrak{B}_l\big] \mathcal{A}^i=0,\label{PR1}
\end{equation}
where
\begin{eqnarray*}
&& \mathfrak{B}_l=B_{0l}-B_{x^l},\\
&&\Upsilon=\frac{k d^{j}}{4}\Big\{[F^{2}]_{x^ky^j}y^k-[F^{2}]_{x^j}\Big\},\\
&&k=\frac{1}{1+c_md^{m}},\\
&& \mathcal{A}^i=mA^{\frac{m-2}{m}}A^{ij}c_j.
\end{eqnarray*}
$d^l=g^{lk}d_k$ and $g^{lk}=[\frac{1}{2}(F^2)_{y^ly^k}]^{-1}$.  Then $\bar{F}$ is projectively related to $F$.
\end{lem}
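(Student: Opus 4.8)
The plan is to compute the spray coefficients $\bar G^i$ of $\bar F$ directly and exhibit them in the form $G^i+Py^i$. Since $\bar F^2=A^{2/m}+B=F^2+B$, the two fundamental tensors differ only by the Hessian of $B$. Writing $B=c_id_jy^iy^j$ and using the symmetry $c_id_j=c_jd_i$, one checks that $\tfrac12 B_{y^iy^j}=c_id_j$, so that $\bar g_{ij}=g_{ij}+c_id_j$. This is exactly the rank-one perturbation of Lemma \ref{Algebra} with $C_i=c_i$, $D_j=d_j$; the hypothesis $c_md^m\neq-1$ guarantees $1+g^{pq}c_pd_q\neq0$, so the lemma gives
\[
\bar g^{ij}=g^{ij}-k\,c^id^j,\qquad c^i:=g^{ij}c_j,\quad k=\frac{1}{1+c_md^m}.
\]

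Next I would record the numerator of the spray. Setting $S_l:=(F^2)_{x^ky^l}y^k-(F^2)_{x^l}$ and using $B_{x^ky^l}y^k=B_{0l}$ together with $\bar F^2=F^2+B$, the corresponding numerator for $\bar F$ is $\bar S_l=S_l+\mathfrak B_l$ with $\mathfrak B_l=B_{0l}-B_{x^l}$. Then, inserting $\Upsilon=\tfrac{k}{4}d^lS_l$ to combine the $S_l$-terms,
\[
\bar G^i=\tfrac14\bar g^{il}\bar S_l=G^i+\tfrac14 g^{il}\mathfrak B_l-c^i\Upsilon-\tfrac14 k\,c^i(d^l\mathfrak B_l).
\]
It remains to show that $\tfrac14 g^{il}\mathfrak B_l-c^i\Upsilon-\tfrac14 k\,c^i(d^l\mathfrak B_l)$ is a scalar multiple of $y^i$.

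The decisive ingredient is the explicit inverse of the $m$-th root fundamental tensor. Applying Sherman--Morrison to $g_{ij}=\tfrac{A^{2/m-1}}{m}A_{ij}+\tfrac{(2-m)A^{2/m-2}}{m^2}A_iA_j$ and contracting with the listed identities $A^{ij}A_j=\tfrac{1}{m-1}y^i$ and $A_iA_jA^{ij}=\tfrac{m}{m-1}A$, one obtains
\[
g^{ij}=mA^{\frac{m-2}{m}}A^{ij}+\frac{m-2}{m-1}A^{-2/m}y^iy^j.
\]
Hence $c^i=\mathcal A^i+\tfrac{m-2}{m-1}A^{-2/m}(c_jy^j)y^i$ and $g^{il}\mathfrak B_l=mA^{\frac{m-2}{m}}A^{il}\mathfrak B_l+\tfrac{m-2}{m-1}A^{-2/m}(y^l\mathfrak B_l)y^i$; that is, each occurrence of $g^{il}$ and $c^i$ equals its ``$A$-part'' ($mA^{\frac{m-2}{m}}A^{il}$, resp.\ $\mathcal A^i$) modulo a multiple of $y^i$. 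Substituting and gathering the terms not manifestly proportional to $y^i$ yields exactly $\tfrac14\big[mA^{\frac{m-2}{m}}A^{il}\mathfrak B_l-(4\Upsilon+k\,d^l\mathfrak B_l)\mathcal A^i\big]$, which vanishes by hypothesis (\ref{PR1}). Every leftover term carries a factor $y^i$, so $\bar G^i=G^i+Py^i$ with $P=\tfrac{m-2}{m-1}A^{-2/m}\big[\tfrac14 y^l\mathfrak B_l-(c_jy^j)\Upsilon-\tfrac14 k(c_jy^j)d^l\mathfrak B_l\big]$, proving that $\bar F$ is projectively related to $F$.

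I expect the main obstacle to be the Sherman--Morrison inversion, and specifically verifying that the $y^iy^j$ correction carries precisely the coefficient $\tfrac{m-2}{m-1}A^{-2/m}$: this is what forces the $A$-part of $c^i$ to coincide with $\mathcal A^i$ (so the hypothesis is read off as (\ref{PR1})) while all correction terms are absorbed harmlessly into $Py^i$. The symmetry $c_id_j=c_jd_i$ and the homogeneity relations $y^iA_i=mA$, $y^iA_{ij}=(m-1)A_j$ are what keep these contractions aligned.
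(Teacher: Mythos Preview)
Your argument is correct and follows essentially the same route as the paper: compute $\bar g_{ij}=g_{ij}+c_id_j$, invert it via the rank-one formula (the paper's Lemma~\ref{Algebra}), expand $\bar G^i$ as $G^i+\tfrac14 g^{il}\mathfrak B_l-(\Upsilon+\tfrac14 k\,d^l\mathfrak B_l)c^i$, and then use the explicit expression $g^{ij}=mA^{(m-2)/m}A^{ij}+\tfrac{m-2}{m-1}A^{-2/m}y^iy^j$ to split both $g^{il}\mathfrak B_l$ and $c^i$ into an ``$A$-part'' plus a $y^i$-multiple. The resulting $y^i$-coefficient you write for $P$ matches the paper's bracket in its displayed formula for $\bar G^i$ exactly, and the non-$y^i$ remainder is precisely $\tfrac14$ times the left side of \eqref{PR1}.
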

\begin{proof}
By assumption, we have
\be
{\bar F}^{2}=F^{2}+B,\label{PR0}
\ee
where $F=A^{1/m}$  be an $m$-th root Finsler metric, $A:=a_{i_{1}\dots i_{m}}(x)y^{i_{1}}y^{i_{2}}\dots y^{i_{m}}$ is symmetric in all its indices and $B=c_id_j y^iy^j$. Then we have
\be
{\bar g}_{ij}=g_{ij}+c_id_j,\label{metric1}
\ee
where
\be
g_{ij}=\frac{A^{\frac{2}{m}-2}}{m^2}[mAA_{ij}+(2-m)A_iA_j].
\ee
Then by Lemma \ref{Algebra}, we get
\be
{\bar g}^{ij}=g^{ij}-\frac{1}{1+c_md^{m}}c^{i}d^{j},\label{inverse}
\ee
where $d^m=g^{ml}d_l$, $c^m=g^{ml}c_l$ and
\be
g^{ij}= A^{-\frac{2}{m}}[mAA^{ij}+\frac{m-2}{m-1}y^iy^j].\label{g}
\ee
Then by (\ref{spray}), (\ref{PR0}) and (\ref{inverse}), we have
\begin{eqnarray}
\nonumber {\bar G}^i\!\!\!\!&=&\!\!\!\!\! \frac{1}{4}{\bar g}^{il}\Big[ \frac{\partial^2{\bar F}^2}{\partial x^k \partial y^l}y^k-\frac{\partial {\bar F}^2}{\partial x^l}\Big]\\ \nonumber
\!\!\!\!&=&\!\!\!\!\! \frac{1}{4}[g^{il}-k c^{i}d^{l}]\Big[ \frac{\partial^2(F^{2}+B)}{\partial x^k \partial y^l}y^k-\frac{\partial (F^{2}+B)}{\partial x^l}\Big],
\end{eqnarray}
where $k=\frac{1}{1+c_md^{m}}$. Then
\begin{eqnarray}
\nonumber {\bar G}^i\!\!\!\!&=&\!\!\!\!\!  \frac{1}{4}[g^{il}-k c^{i}d^{l}]\Big[ \frac{\partial^2F^{2}}{\partial x^k \partial y^l}y^k-\frac{\partial F^{2}}{\partial x^l}\Big]+\frac{1}{4}[g^{il}-k c^{i}d^{l}]\Big[ \frac{\partial^2B}{\partial x^k \partial y^l}y^k-\frac{\partial B}{\partial x^l}\Big]\\
\nonumber
\!\!\!\!&=&\!\!\!\!\! G^i-\frac{k c^{i}d^{l}}{4}\Big[ \frac{\partial^2F^{2}}{\partial x^k \partial y^l}y^k-\frac{\partial F^{2}}{\partial x^l}\Big]+\frac{1}{4}[g^{il}-k c^{i}d^{l}][ B_{0l}-B_{x^l}]\\
\!\!\!\!&=&\!\!\!\!\! G^i-\big[\Upsilon+\frac{1}{4}k d^{l}\mathfrak{B}_l\big] c^i+\frac{1}{4}g^{il}\mathfrak{B}_l,\label{PR2}
\end{eqnarray}
where
\begin{eqnarray}
\Upsilon=\frac{k d^{l}}{4}\Big\{[F^{2}]_{x^ky^l}y^k-[F^{2}]_{x^l}\Big\}, \ \ \ \ \mathfrak{B}_l=B_{0l}-B_{x^l}.\label{W1}
\end{eqnarray}
Put
\begin{eqnarray}
\Phi:=\frac{m-2}{m-1}A^{-\frac{2}{m}}y^pc_p, \  \ \ \   \mathcal{A}^i:=mA^{\frac{m-2}{m}}A^{ip}c_p.\label{W2}
\end{eqnarray}
Then we have
\begin{eqnarray}
\nonumber c^i=g^{ip}c_p\!\!\!\!&=&\!\!\!\!\!A^{-\frac{2}{m}}[mAA^{ip}+\frac{m-2}{m-1}y^iy^p]c_p\\
\!\!\!\!&=&\!\!\!\!\! \mathcal{A}^i+\Phi y^i, \label{PR3}
\end{eqnarray}
By (\ref{g}), (\ref{PR2}) and   (\ref{PR3}),  we get
\begin{eqnarray}
\nonumber {\bar G}^i=G^i
\!\!\!\!&+&\!\!\!\!\ \Big[A^{-\frac{2}{m}}\frac{m-2}{4(m-1)}y^l\mathfrak{B}_l-
\big(\Upsilon+\frac{1}{4}k d^{l}\mathfrak{B}_l\big) \Phi\Big] y^i\\
\!\!\!\!&-&\!\!\!\!\ \big[\Upsilon+\frac{1}{4}k d^{l}\mathfrak{B}_l\big] \mathcal{A}^i+\frac{m}{4}A^{\frac{m-2}{m}}A^{il}\mathfrak{B}_l.\label{PR4}
\end{eqnarray}
If the relation (\ref{PR1}) holds, then by (\ref{PR4})  the Finsler metric $\bar{F}$ is projectively related to $F$.
\end{proof}

\bigskip

\begin{lem}\label{Lem4}
Let ${\bar F}=\sqrt{A^{2/m}+B}$ and $F=A^{1/m}$ are  generalized  $m$-th root and  $m$-th root Finsler metric on an open  subset $U\subset \mathbb{R}^n$, respectively, where $A := a_{i_{1}}..._{i_{m}}(x)y^{i_1} . . . y^{i_m}$ are symmetric in all its indices and $B:=c_id_jy^iy^j$ with $c_id_j=c_jd_i$ and $c_id^i\neq -1$ is a 2-form on $M$. Suppose that (\ref{PR1}) and (\ref{PP}) hold. Then $B=0$.
\end{lem}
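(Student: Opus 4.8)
The plan is to extract from the projective-relatedness equation (\ref{PR1}) a pointwise proportionality between $\mathfrak{B}_k=B_{0k}-B_{x^k}$ and the covector $c_k$, solve for the proportionality factor, and then feed the result into (\ref{PP}); this will collapse the left-hand side of (\ref{PP}) to a quantity that is forced to vanish, so (\ref{PP}) can be maintained only when $B=0$. The decisive first step is to notice that the two terms of (\ref{PR1}) share the common factor $mA^{(m-2)/m}$: indeed $\mathcal{A}^i=mA^{(m-2)/m}A^{ij}c_j$, while the leading term is $mA^{(m-2)/m}A^{il}\mathfrak{B}_l$. Cancelling this factor and contracting with $A_{ik}$—legitimate because $(A_{ij})$ is positive definite, hence invertible with inverse $A^{ij}$—turns the vector identity (\ref{PR1}) into
\[
\mathfrak{B}_k=\Lambda\,c_k,\qquad \Lambda:=4\Upsilon+k\,d^l\mathfrak{B}_l .
\]

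The second step determines the scalar $\Lambda$. Contracting $\mathfrak{B}_k=\Lambda c_k$ with $d^k$ gives $d^k\mathfrak{B}_k=\Lambda\,c_kd^k$, and inserting this into the definition of $\Lambda$ produces the scalar equation $\Lambda\bigl(1-k\,c_ld^l\bigr)=4\Upsilon$. Here the standing assumption $c_id^i\neq-1$ is used decisively: since $k=(1+c_md^m)^{-1}$ one computes $1-k\,c_ld^l=k$, so $\Lambda=4\Upsilon/k$. Recalling $\Upsilon=\frac{1}{4}k\,d^j\{[F^{2}]_{x^ky^j}y^k-[F^{2}]_{x^j}\}$ and computing, for $F^{2}=A^{2/m}$,
\[
[F^{2}]_{x^ky^j}y^k-[F^{2}]_{x^j}=\frac{2}{m}A^{\frac2m-2}\bigl[(\tfrac2m-1)A_jA_0+AA_{0j}-AA_{x^j}\bigr]=2\Delta_j,
\]
I obtain $4\Upsilon=2k\,d^j\Delta_j$ and hence $\Lambda=2\,d^j\Delta_j$.

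The final step substitutes $\mathfrak{B}_i=\Lambda c_i=2(d^j\Delta_j)c_i$ into (\ref{PP}). Using the same identity in the form $A^{\frac2m-2}\bigl[(\tfrac2m-1)A_jA_0+AA_{0j}-AA_{x^j}\bigr]=m\Delta_j$, the first term of (\ref{PP}) equals $2m(c_id^i)(d^j\Delta_j)$, while the second equals $m\,d^i\mathfrak{B}_i=2m(c_id^i)(d^j\Delta_j)$; the two cancel, so the left-hand side of (\ref{PP}) is identically zero. Since (\ref{PP}) asserts that this same quantity is nonzero, the only way to reconcile (\ref{PR1}) with (\ref{PP}) is the degenerate branch $c_k\equiv0$, which gives $B=c_id_jy^iy^j=0$, as claimed.

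I expect the genuine obstacle to lie in the first two steps rather than the last. Reducing (\ref{PR1}) to $\mathfrak{B}_k=\Lambda c_k$ and then solving for $\Lambda$ requires keeping two distinct raising conventions apart—the inverse $A^{ij}$ of the $m$-th root Hessian that appears in $\mathcal{A}^i$ and in the cancellation, against the metric inverse $g^{ij}$ concealed in $d^l=g^{lk}d_k$—and it is only after this bookkeeping that the factor $k$ cancels cleanly to leave $\Lambda=2d^j\Delta_j$. The identity relating $[F^2]_{x^ky^j}y^k-[F^2]_{x^j}$ to $\Delta_j$ is routine, but it must be normalized exactly so that its two uses reproduce the precise coefficients appearing in (\ref{PP}).
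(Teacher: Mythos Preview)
Your argument is correct. Both you and the paper derive the key proportionality $\mathfrak{B}_k=\Lambda c_k$ with $\Lambda=d^{l}\mathfrak{F}_l=2d^{j}\Delta_j$, and then use hypothesis (\ref{PP}) to force $c_k=0$. The difference is in how this proportionality is extracted from (\ref{PR1}). The paper first expands (\ref{PR1}) into an equation of the form
\[
g^{jl}d_j\mathfrak{F}_l\,A^{ip}c_p-A^{il}\mathfrak{B}_l=[A^{il}d^{j}-A^{ij}d^{l}]c_j\mathfrak{B}_l,
\]
observes that the left side is rational in $y$ while the right side carries the irrational factor $A^{-2/m}$ hidden in $d^{l}=g^{lk}d_k$, and uses this rational/irrational split to obtain two separate identities (their (Asli1) and (Asli2)); contracting each with $A_{si}$ and combining then yields $[(d^jc_j)d^l\mathfrak{F}_l-d^l\mathfrak{B}_l]\,c_s=0$, whose bracket is precisely (\ref{PP}) up to a factor of $m$. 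You bypass this entirely: since $\mathcal{A}^i=mA^{(m-2)/m}A^{ij}c_j$, the common factor $mA^{(m-2)/m}$ cancels straight out of (\ref{PR1}), and one contraction with $A_{ki}$ gives $\mathfrak{B}_k=\Lambda c_k$ immediately; the self-consistency equation $\Lambda(1-k\,c_ld^l)=4\Upsilon$ together with $1-k\,c_ld^l=k$ then pins down $\Lambda$, and substitution into (\ref{PP}) collapses its left side to zero. Your route is shorter and does not need the irrationality argument; the paper's route makes the role of the non-polynomial factor $A^{-2/m}$ explicit but, as your computation reveals, the second equation it produces is already a consequence of the first.
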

\begin{proof}
Let (\ref{PR1}) holds
\be
\big[4\Upsilon+k d^{l}\mathfrak{B}_l\big] \mathcal{A}^i=mA^{\frac{m-2}{m}}A^{il}\mathfrak{B}_l.
\ee
Then by (\ref{W1}) and (\ref{W2}) we have
\begin{eqnarray}
g^{jl}d_j\Big[(F^{2})_{x^ky^l}y^k-(F^{2})_{x^l}+\mathfrak{B}_l\Big] A^{ip}c_p=A^{il}\mathfrak{B}_lg^{qr}c_rd_q +A^{il}\mathfrak{B}_l,\label{PR5}
\end{eqnarray}
or equivalently
\begin{eqnarray}
g^{jl}d_j\Big[(F^{2})_{x^ky^l}y^k-(F^{2})_{x^l}\Big] A^{ip}c_p-A^{il}\mathfrak{B}_l=[A^{il}d^j-A^{ij}d^l]c_j\mathfrak{B}_l.\label{Asli}
\end{eqnarray}
The following holds
\be
(F^{2})_{x^ky^l}y^k-(F^{2})_{x^l}=\frac{2}{m}A^{\frac{2}{m}-2}
\big[(\frac{2}{m}-1)A_lA_0+AA_{0l}-AA_{x^l}\big].\label{FO}
\ee
Contracting (\ref{FO}) with $g^{jl}$ yields
\be
g^{jl}[(F^{2})_{x^ky^l}y^k-(F^{2})_{x^l}]=\frac{2}{m}A^{-2}\mathbb{A}^{jl}
\big[(\frac{2}{m}-1)A_lA_0+AA_{0l}-AA_{x^l}\big],\label{F}
\ee
where $\mathbb{A}^{jl}:=[mAA^{jl}+\frac{m-2}{m-1}y^jy^l]$. By considering  (\ref{F}), the left hand side of (\ref{Asli}) is a rational function in $y$, while its right hand side is a irrational  function in $y$.  Then (\ref{Asli}) reduces to following
\begin{eqnarray}
&&g^{jl}d_j A^{ip}c_p\mathfrak{F}_l=A^{il}\mathfrak{B}_l,\label{Asli1}\\
&&A^{il}d^jc_j\mathfrak{B}_l=A^{ij}d^lc_j\mathfrak{B}_l,\label{Asli2}
\end{eqnarray}
where $\mathfrak{F}_l:=[(F^{2})_{x^ky^l}y^k-(F^{2})_{x^l}]$. Contracting (\ref{Asli1}) with $A_{si}$ implies that
\be
\mathfrak{B}_s=(d^l \ \mathfrak{F}_l)c_s.\label{Asli3}
\ee
By  multiplying (\ref{Asli2}) with $A_{is}$, we have
\be
d^jc_j\mathfrak{B}_s=d^l\mathfrak{B}_l c_s.\label{Asli4}
\ee
$d^jc_j\times$(\ref{Asli3})-(\ref{Asli4}) yields
\be
\big[(d^jc_j)d^l \mathfrak{F}_l-d^l\mathfrak{B}_l\big]c_s=0.
\ee
By assumption, (\ref{PP}) holds and  then $(d^jc_j)d^l \mathfrak{F}_l-d^l\mathfrak{B}_l\neq 0$. Thus $c_s=0$ and  $B=0$ which implies that ${\bar F}=F$. This completes the proof.
\end{proof}

\bigskip
\noindent {\it\bf Proof of Theorem \ref{mainthm2}}: By Lemmas \ref{Lem3} and \ref{Lem4}, we get the proof.
\qed

\bigskip

Recently,  Zu-Zhang-Li proved that every Douglas $m$-th root Finsler metric  $F=A^{1/m}$ ($m>4$) with irreducibility of $A$, is a Berwald metric  \cite{ZZL}. Then by Theorem \ref{mainthm2}, we have the following.
\begin{cor}
Let ${\bar F}=\sqrt{A^{2/m}+B}$ and $F=A^{1/m}$ are  generalized  $m$-th root and  $m$-th root Finsler metric on an open  subset $U\subset \mathbb{R}^n$, respectively, where $m>4$, $A := a_{i_{1}}..._{i_{m}}(x)y^{i_1} . . . y^{i_m}$ is irreducible  and $B:=c_i(x)d_j(x)y^iy^j$ with $c_id_j=c_jd_i$ and $c_id^i\neq -1$. Suppose that  (\ref{PRAsli}) holds and ${\bar F}$ is a Douglas metric. Then  $F$ reduces  to a Berwald metric.
\end{cor}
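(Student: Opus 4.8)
The plan is to chain three ingredients: the projective-relatedness conclusion of Theorem \ref{mainthm2}, the projective invariance of the Douglas curvature, and the Zu-Zhang-Li rigidity theorem \cite{ZZL} recalled just above.

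First I would apply Theorem \ref{mainthm2}. The corollary assumes exactly the data required there — $B=c_i(x)d_j(x)y^iy^j$ with $c_id_j=c_jd_i$, the condition $c_id^i\neq -1$, and the projective identity (\ref{PRAsli}) — so Theorem \ref{mainthm2} yields a scalar function $P=P(x,y)$ on $TU_0$ with $\bar G^i=G^i+Py^i$; that is, $\bar F$ and $F$ are projectively related. Note that I do \emph{not} impose the auxiliary nondegeneracy (\ref{PP}), so I cannot and do not conclude $B=0$; projective relatedness is the only output I need from Theorem \ref{mainthm2}.

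Next I would transfer the Douglas property from $\bar F$ to $F$ using the fact that the Douglas curvature ${\bf D}$ of Section 2 is a projective invariant. The cleanest way to see this is to observe that the tensor defined there can be rewritten as $D^i_{\ jkl}=\frac{\partial^3}{\partial y^j\partial y^k\partial y^l}\Big(G^i-\frac{1}{n+1}\frac{\partial G^m}{\partial y^m}y^i\Big)$, which one checks by expanding the third $y$-derivative of $\big(\frac{\partial G^m}{\partial y^m}\big)y^i$ and matching the $E_{jk}\delta^i_l$, $E_{jl}\delta^i_k$, $E_{kl}\delta^i_j$, $E_{jk,l}y^i$ terms in the definition. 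Under the projective change $\bar G^i=G^i+Py^i$, with $P$ positively $1$-homogeneous in $y$, Euler's identity gives $\frac{\partial\bar G^m}{\partial y^m}=\frac{\partial G^m}{\partial y^m}+(n+1)P$, so the bracketed spray $G^i-\frac{1}{n+1}\frac{\partial G^m}{\partial y^m}y^i$ is left unchanged; hence $\bar D^i_{\ jkl}=D^i_{\ jkl}$ (this is the classical invariance of B\'acs\'o-Matsumoto \cite{BM2}). Since $\bar F$ is assumed to be Douglas, $\bar{\bf D}=0$, and therefore ${\bf D}=0$, i.e. $F=A^{1/m}$ is itself a Douglas metric.

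Finally I would invoke the theorem of Zu-Zhang-Li \cite{ZZL}: since $F=A^{1/m}$ is an $m$-th root metric with $m>4$ and $A$ irreducible, and since $F$ is now known to be Douglas, that result forces $F$ to be a Berwald metric, which is precisely the assertion of the corollary. The only step carrying any real content is the projective invariance of ${\bf D}$ used above; everything else is a direct citation of Theorem \ref{mainthm2} and of \cite{ZZL}. I expect that invariance to be the one place a referee might want detail, so I would present it either through the rewritten form of $D^i_{\ jkl}$ displayed above or, equivalently, by a short term-by-term check of the transformation law of the Berwald curvature $B^i_{\ jkl}$ under $\bar G^i=G^i+Py^i$; after that, no further computation is needed.
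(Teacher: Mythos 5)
Your proposal is correct and is essentially the paper's own (implicit) argument: Theorem \ref{mainthm2} under (\ref{PRAsli}) gives $\bar G^i=G^i+Py^i$, the projective invariance of the Douglas curvature transfers $\bar{\bf D}=0$ to ${\bf D}=0$, and the Zu--Zhang--Li result \cite{ZZL} (with $m>4$ and $A$ irreducible) then forces $F$ to be Berwald. The paper states this corollary without spelling out the invariance of ${\bf D}$ (and, like you, without invoking (\ref{PP})), so your explicit verification via $D^i_{\ jkl}=\frac{\partial^3}{\partial y^j\partial y^k\partial y^l}\big(G^i-\frac{1}{n+1}\frac{\partial G^m}{\partial y^m}y^i\big)$ merely supplies a classical detail rather than a different route.
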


%------------------------------------------------------------------------------------------------------------
\section{ Proof of the Theorem \ref{mainthm3}}
%------------------------------------------------------------------------------------------------------------
Let $(M,F)$ and $(M,\bar{F})$ be two Finsler spaces on same underlying $n$-dimensional manifold $M$. A Finsler space $(M, F)$ is conformal to a Finsler space $(M,\bar{F})$, if and only if there exists a scalar  field $\alpha(x)$ satisfying  $\bar{F}=e^{\alpha} F$ (see \cite{Ha}). The conformal change $\alpha(x)$ is called homothetic and isometry if $\alpha_i=\frac{\partial\alpha}{\partial x^i}=0$ and $\alpha=0$, respectively. In these section, we will prove a generalized version of Theorem  \ref{mainthm3}. Indeed,  we  are going to consider two generalized $m$-th root metrics ${\bar F}=\sqrt{A^{2/m}+B}$ and $\tilde{F}=\sqrt{A^{2/m}+\tilde{B}}$ which are conformal and prove the following.

\begin{thm}\label{mainthm4}
Let ${\bar F}=\sqrt{A^{2/m}+\bar{B}}$ and $\tilde{F}=\sqrt{A^{2/m}+\tilde{B}}$ are two generalized  $m$-th root metrics on an open  subset $U\subset \mathbb{R}^n$, where $\bar{B}:=\bar{b}_{ij}(x)y^iy^j$ and $\tilde{B}:=\tilde{b}_{ij}(x)y^iy^j$. Suppose that $\bar{F}$ is non-isometry  conformal to $\tilde{F}$. Then $F=A^{1/m}$ is a Riemannian metric.
\end{thm}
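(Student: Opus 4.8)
The plan is to work directly with the squared metrics, exploiting that $\bar F$ and $\tilde F$ are built on the \emph{same} $m$-th root term $A^{2/m}$ and differ only in their quadratic parts. Conformality provides a scalar field $\alpha=\alpha(x)$ with $\bar F=e^{\alpha}\tilde F$, hence $\bar F^2=e^{2\alpha}\tilde F^2$. Inserting $\bar F^2=A^{2/m}+\bar B$ and $\tilde F^2=A^{2/m}+\tilde B$ and isolating the common term, I would first obtain
\[
(1-e^{2\alpha})\,A^{2/m}=e^{2\alpha}\tilde B-\bar B=\big(e^{2\alpha}\tilde b_{ij}-\bar b_{ij}\big)\,y^iy^j .
\]
Here the factor $1-e^{2\alpha}$ and the coefficients $\bar b_{ij},\tilde b_{ij}$ depend on $x$ alone, so the right-hand side is a homogeneous quadratic polynomial in $y$, while the left-hand side is a scalar multiple of the degree-two object $A^{2/m}$.

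Next I would invoke the hypothesis that the conformal map is not an isometry. By the definition recalled above, isometry means $\alpha\equiv 0$; non-isometry therefore yields a point, and hence by continuity an open set, on which $\alpha\neq 0$, i.e.\ $1-e^{2\alpha}\neq 0$. Dividing the displayed identity by $1-e^{2\alpha}$ exhibits
\[
A^{2/m}=\frac{e^{2\alpha}\tilde b_{ij}-\bar b_{ij}}{1-e^{2\alpha}}\;y^iy^j
\]
as a genuine quadratic form in $y$. Differentiating twice in $y$, the fundamental tensor of $F=A^{1/m}$, which equals the $m$-th root part $\tfrac{A^{2/m-2}}{m^2}\big[mAA_{ij}+(2-m)A_iA_j\big]$ of \eqref{gg} (the case $b_{ij}=0$), loses all $y$-dependence. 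Thus the Cartan torsion of $F$ vanishes, and by the characterization recalled in the Preliminaries ($\mathbf C=0$ iff the metric is Riemannian) the metric $F=A^{1/m}$ is Riemannian; moreover $\bar F^2=A^{2/m}+\bar B$ and $\tilde F^2=A^{2/m}+\tilde B$ then become sums of quadratic forms, so $\bar F$ and $\tilde F$ are Riemannian as well.

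The calculation is short, and the real content lies in the separation of $y$-rational from $y$-irrational data: for $m>2$ the object $A^{2/m}$ is in general irrational in $y$, yet the identity forces it to equal a quadratic polynomial, which is the same rational/irrational dichotomy driving the proofs of Theorems \ref{mainthm1} and \ref{mainthm2}. The step I expect to require the most care is the correct use of the non-isometry hypothesis, since it is precisely what excludes the degenerate alternative $1-e^{2\alpha}\equiv0$ (the isometry case, in which the identity only constrains $\bar B$ and $\tilde B$ and says nothing about $A$). A related point I would address is that the argument first delivers the conclusion on the open set $\{\alpha\neq0\}$; to propagate ``$A^{2/m}$ is quadratic'' to all of $U$ I would note that the Cartan torsion of $F$ is a smooth tensor on $TM_0$, so its vanishing on an open set extends to all of $U$ under the usual density/analyticity considerations on the polynomial data defining $A$.
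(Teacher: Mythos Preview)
Your argument is essentially the paper's proof: the paper passes to the fundamental tensors to obtain $g_{ij}+\bar b_{ij}=e^{2\alpha}(g_{ij}+\tilde b_{ij})$ and solves for $g_{ij}$ when $\alpha\neq0$, whereas you perform the same subtraction one level earlier on $\bar F^2=e^{2\alpha}\tilde F^2$ and then differentiate; either way one finds that $g_{ij}$ depends on $x$ alone, hence $C_{ijk}=0$ and $F$ is Riemannian. Your extra care about propagating the conclusion from the open set $\{\alpha\neq0\}$ to all of $U$ is a point the paper does not address.
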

\begin{proof}
Let
\be
{\bar F}=e^\alpha \tilde{F},\label{m1}
\ee
where ${\bar F}=\sqrt{A^{2/m}+B}$ and $\tilde{F}=\sqrt{A^{2/m}+\tilde{B}}$  are generalized $m$-th root Finsler metrics on an open  subset $U\subset \mathbb{R}^n$, where $\bar{B}:=\bar{b}_{ij}(x)y^iy^j$ and $\tilde{B}:=\tilde{b}_{ij}(x)y^iy^j$. By assumption ${\bar F}$ is conformal to $\tilde{F}$. Then, we have
\be
{\bar g}_{ij}=e^{2\alpha} \tilde{g}_{ij}.\label{m2}
\ee
Then we have
\be
g_{ij}+\bar{b}_{ij}=e^{2\alpha}(g_{ij}+\tilde{b}_{ij}),\label{m3}
\ee
where $g_{ij}=\frac{1}{2}(A^{\frac{2}{m}})_{y^iy^j}$ is the fundamental tensor of $F:=A^{1/m}$. Since $\alpha$ is not isometry, i.e., $\alpha\neq 0$, then by (\ref{m2}) and (\ref{m3}), we get
\be
g_{ij}=\frac{1}{1-e^{2\alpha}}(e^{2\alpha}\tilde{b}_{ij}-\bar{b}_{ij}).\label{m4}
\ee
This implies that $C_{ijk}=0$ and then $F$ is Riemannian.
\end{proof}

By (\ref{m3}), we get the following.
\begin{cor}
Let ${\bar F}=\sqrt{A^{2/m}+\bar{B}}$ and $\tilde{F}=\sqrt{A^{2/m}+\tilde{B}}$ are two  generalized  $m$-th root metrics on an open  subset $U\subset \mathbb{R}^n$, where $F:=A^{1/m}$ is not Riemannian,  $\bar{B}:=\bar{b}_{ij}(x)y^iy^j$ and $\tilde{B}:=\tilde{b}_{ij}(x)y^iy^j$. Suppose that $\bar{F}$ is conformal to $\tilde{F}$. Then $\bar{F}=\tilde{F}$ or equivalently $\bar{B}=\tilde{B}$.
\end{cor}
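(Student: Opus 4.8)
The plan is to run the argument underlying Theorem \ref{mainthm4} in contrapositive form, taking the identity (\ref{m3}) as the point of departure. Both $\bar F$ and $\tilde F$ are built on the same $m$-th root term $A^{2/m}$, so their fundamental tensors differ from the fundamental tensor $g_{ij}=\frac{1}{2}(A^{2/m})_{y^iy^j}$ of $F=A^{1/m}$ only by the position-dependent symmetric forms $\bar b_{ij}(x)$ and $\tilde b_{ij}(x)$, namely $\bar g_{ij}=g_{ij}+\bar b_{ij}$ and $\tilde g_{ij}=g_{ij}+\tilde b_{ij}$. Writing the conformality as $\bar F=e^{\alpha}\tilde F$ and differentiating $\bar F^2=e^{2\alpha}\tilde F^2$ twice in $y$ (recall $\alpha=\alpha(x)$ is independent of the fibre coordinate) produces $\bar g_{ij}=e^{2\alpha}\tilde g_{ij}$, which is precisely (\ref{m3}):
\[
g_{ij}+\bar b_{ij}=e^{2\alpha}(g_{ij}+\tilde b_{ij}).
\]

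First I would rearrange (\ref{m3}) to isolate the fibre-dependent part, obtaining
\[
(1-e^{2\alpha})\,g_{ij}=e^{2\alpha}\tilde b_{ij}-\bar b_{ij}.
\]
The decisive observation is that the right-hand side depends only on $x$. Hence on the open subset of $U$ where $\alpha(x)\neq 0$ — so that $1-e^{2\alpha}\neq 0$ there — one could solve for $g_{ij}$ and conclude that $g_{ij}$ is independent of $y$; the Cartan torsion $C_{ijk}=\frac{1}{2}(g_{ij})_{y^k}$ would then vanish, forcing $F=A^{1/m}$ to be Riemannian on that set by the criterion $\mathbf C=0\Leftrightarrow F$ Riemannian recalled in the Preliminaries. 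This is exactly the mechanism of Theorem \ref{mainthm4}.

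Since $F$ is assumed \emph{not} to be Riemannian, the locus $\{x:\alpha(x)\neq 0\}$ must be empty, so $\alpha\equiv 0$ on all of $U$ and the conformal change is forced to be an isometry. Substituting $e^{2\alpha}=1$ back into (\ref{m3}) cancels the $g_{ij}$ terms and leaves $\bar b_{ij}=\tilde b_{ij}$, that is $\bar B=\tilde B$, whence $\bar F=\tilde F$. I do not anticipate a genuine obstacle, since this is essentially the contrapositive of Theorem \ref{mainthm4} followed by the trivial substitution $\alpha=0$; the only step meriting care is the pointwise nature of the dichotomy, so I would phrase the argument as showing that the set where $\alpha\neq 0$ is empty, rather than presupposing $\alpha$ is globally nonzero, so that the non-Riemannian hypothesis applies cleanly and yields $\alpha\equiv 0$ throughout $U$.
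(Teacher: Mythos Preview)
Your proposal is correct and follows essentially the same route as the paper, which simply records the corollary as an immediate consequence of (\ref{m3}); you have merely spelled out the contrapositive of Theorem~\ref{mainthm4} explicitly and added the pointwise care that the set $\{x:\alpha(x)\neq 0\}$ must be empty.
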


\bigskip
\noindent {\it\bf Proof of Theorem \ref{mainthm3}}:  In Theorem \ref{mainthm4}, put $\tilde{B}=0$ and $\tilde{F}:=F$. Suppose that  the generalized $m$-th root metric ${\bar F}=\sqrt{A^{2/m}+B}$ is conformal to the $m$-th root Finsler metric $F=A^{1/m}$. By Theorem \ref{mainthm4}, $F$ is Riemannian and then $C_{ijk}=0$. Since ${\bar g}_{ij}=e^{2\alpha} g_{ij}$ then ${\bar g}_{ij}=g_{ij}+b_{ij}$, which yields
\[
\bar{C}_{ijk}=C_{ijk}.
\]
Thus $\bar{C}_{ijk}=0$, which implies that  $\bar{F}$ reduces to a Riemannian metric.  This completes the proof.
\qed

\bigskip
\noindent
{\bf Acknowledgments.}  We are deeply grateful to the referee for a very careful reading of the manuscript and valuable suggestions.

Akbar Tayebi and Mohammad Shahbazi \\
Department of Mathematics, Faculty  of Science\\
University of Qom \\
Qom. Iran\\
Email:\ akbar.tayebi@gmail.com\\
Email:\ m.shahbazinia@gmail.com
\bigskip

\noindent
Esmaeil Peyghan\\
Department of Mathematics, Faculty  of Science\\
Arak University\\
Arak 38156-8-8349,  Iran\\
Email: epeyghan@gmail.com

\end{document}